\documentclass[11pt]{article}

\usepackage[margin=1in]{geometry}
\usepackage[T1]{fontenc}
\usepackage{lmodern}
\usepackage{microtype}
\usepackage{amsmath,amssymb,amsthm,mathtools}
\usepackage{array,booktabs,enumitem}
\usepackage{placeins}
\usepackage{tikz}
\usepackage{tikz-cd}
\usetikzlibrary{arrows.meta,positioning,calc}
\usepackage[hidelinks]{hyperref}
\hypersetup{
  pdftitle={Olshanskii Epimorphisms Are Standard},
  pdfauthor={Olga Kharlampovich and Alina Vdovina},
  pdfsubject={Standardness of finite-cover splitting epimorphisms},
  pdfkeywords={surface groups, free groups, splitting homomorphisms, Heegaard splittings, Seifert fibered spaces, Reidemeister-Schreier rewriting, Andrews-Curtis transformations}
}

\allowdisplaybreaks

\newtheorem{theorem}{Theorem}[section]
\newtheorem{proposition}[theorem]{Proposition}
\newtheorem{lemma}[theorem]{Lemma}

\theoremstyle{definition}
\newtheorem{definition}[theorem]{Definition}
\theoremstyle{remark}
\newtheorem{remark}[theorem]{Remark}

\newcommand{\Aut}{\operatorname{Aut}}
\newcommand{\rank}{\operatorname{rank}}

\title{Equations in Products of Free Groups and 3-Manifold Groups II: Olshanskii Epimorphisms}
\author{Olga Kharlampovich and  Alina Vdovina}
\date{}

\begin{document}
\maketitle

\begin{abstract}
In 1989 Olshanskii introduced a three-parameter family of
coordinate-surjective homomorphisms from the genus-two surface group to a
direct product of two rank-two free groups.  When the common quotient of
the two coordinate images is finite of order $n$, restriction to the
corresponding regular cover produces an epimorphism
\[
 \pi_1(S_{n+1})\longrightarrow F_{n+1}\times F_{n+1}.
\]
We call these maps \emph{Olshanskii epimorphisms}.  They form an explicit
high-genus test family for the standardness problem for splitting
epimorphisms.  We prove that every Olshanskii epimorphism is standard.
The genus-two homomorphism determines a Heegaard splitting of a Seifert
fibered $3$-manifold over $S^2$ with at most three exceptional fibers.  In
the finite-quotient cases, classical Seifert theory shows that its universal
cover is $S^3$; Waldhausen's theorem then implies that the lifted
genus-$(n+1)$ Heegaard splitting is standard.  The proof uses neither
Perelman's theorem nor the general Poincar\'e theorem.

We also give a constructive treatment of the quaternion case
$Q(2,2,2)\cong Q_8$, whose covering surface has genus nine.  A maximal tree
in the quaternion Schreier graph yields the covering handlebody and explicit
Schreier bases.  Using geometric longitude--meridian pairs and explicit
surface automorphisms supported on the nine one-holed tori, we transform the
lifted meridian words into a free basis.  A separate fixed-rank
Andrews--Curtis certificate reduces the associated balanced presentation.
This paper supplies the detailed proof of the result announced in
\cite{KharlampovichVdovina}.
\end{abstract}

\medskip
\noindent\textbf{Keywords.}
Surface groups, free groups, splitting homomorphisms, Heegaard splittings,
Seifert fibered spaces, Reidemeister--Schreier rewriting,
Andrews--Curtis transformations.

\section{Introduction and motivation}

Let $S_g$ be a closed orientable surface of genus $g$ and let $F_g$ be a
free group of rank $g$.  A genus-$g$ Heegaard splitting
\[
 M=U_1\cup_{S_g}U_2
\]
of a closed orientable $3$-manifold determines two epimorphisms
$\pi_1(S_g)\twoheadrightarrow \pi_1(U_i)\cong F_g$, and hence a homomorphism
\[
 \pi_1(S_g)\longrightarrow F_g\times F_g.
\]
When the product map is onto, it is called a splitting epimorphism.  The
Stallings--Jaco--Hempel approach to the Poincar\'e conjecture asks whether
every such epimorphism is equivalent to the canonical one coming from the
standard splitting of $S^3$; see \cite{Stallings,Jaco,Hempel,GrigorchukKurchanov}.

Olshanskii's paper on homomorphism diagrams gave the first substantial
family designed to test this standardness problem \cite[Section~3.3]{Olshanskii}.
He starts with a three-parameter homomorphism
\[
 \alpha(k,\ell,m):\pi_1(S_2)\longrightarrow F_2\times F_2.
\]
The two coordinate maps are onto, but the product map usually is not.  Its
image is a fiber product over a common quotient $Q(k,\ell,m)$.  If this
quotient is finite of order $n$, then restriction to the corresponding
regular cover gives an epimorphism in genus $n+1$.  Thus a compact genus-two
construction produces complicated splitting epimorphisms in high genus.
For example, $(k,\ell,m)=(2,2,2)$ gives the quaternion group of order $8$
and a genus-nine epimorphism.  Olshanskii also records
\[
 Q(2,-3,5)\cong \mathrm{SL}(2,5)\times \mathbb Z_{31},
\]
of order $3720$, and hence an epimorphism in genus $3721$
\cite[Section~3.3]{Olshanskii}.

These examples are important for three related reasons.  First, they lie
directly in the standardness problem arising from Heegaard splittings.
Second, finite covering and Reidemeister--Schreier rewriting obscure the
standard handles: the resulting high-genus maps are far from the canonical
form in their natural generators.  Third, Olshanskii proposed a genuinely
diagrammatic recognition procedure, based on $a$-rings, $b$-rings, and the
successive excision of figure-eights.  He verified the quaternion case and
described a hand verification of the $(2,-3,5)$ case through intermediate
covers.  A uniform proof therefore tests whether standardness can be
recovered from the structure of the construction rather than from a
case-by-case inspection of very large diagrams.

The main result is the following.

\begin{theorem}[Main theorem]\label{thm:intro-main}
Every Olshanskii epimorphism is standard.
\end{theorem}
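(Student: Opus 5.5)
The plan follows the abstract's outline, with topology doing the work rather than diagrams. \emph{Step 1:} realize $\alpha(k,\ell,m)$ as the splitting homomorphism of an actual genus-two Heegaard splitting $M=U_1\cup_{S_2}U_2$. Concretely, the coordinate maps $\pi_1(S_2)\to F_2$ are each induced by the inclusion of $S_2$ into a handlebody. Each one kills a meridian system, so I would identify the two meridian pairs and check that they form a Heegaard diagram. By van Kampen, $\pi_1(M)$ is the pushout of the two coordinate maps. That pushout is exactly the common quotient $Q(k,\ell,m)$ over which the image of $\alpha$ is a fiber product. Reading off the attaching curves, I expect $M$ to be a Seifert fibered space over $S^2$ with at most three exceptional fibers, of multiplicities related to $|k|,|\ell|,|m|$. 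I would verify this by exhibiting the genus-two splitting as the standard ``vertical'' splitting of such a Seifert manifold, e.g.\ by matching the presentation of $Q(k,\ell,m)$ with the standard one $\langle c_1,c_2,c_3,h\mid c_i^{a_i}h^{b_i},\,c_1c_2c_3=h^{e}, h \text{ central}\rangle$.

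\emph{Step 2:} pass to the cover. Suppose $Q=\pi_1(M)$ is finite of order $n$, and let $\widetilde M\to M$ be the universal cover. Then $\widetilde M$ inherits a Heegaard splitting $\widetilde U_1\cup\widetilde U_2$, whose surface $\widetilde S$ is the regular $n$-fold cover of $S_2$ corresponding to $\ker(\pi_1(S_2)\to Q)$. By Euler characteristic this surface has genus $n+1$, and each $\widetilde U_i$ is a handlebody of rank $n+1$. I would check that the restriction of $\alpha$ to $\pi_1(\widetilde S)$, which is the Olshanskii epimorphism, coincides up to equivalence with the splitting epimorphism of this lifted splitting. This amounts to identifying $\pi_1(\widetilde U_i)$ with the kernel of $F_2\to Q$ inside the $i$-th factor, and the fiber-product description makes that immediate.

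\emph{Step 3:} identify $\widetilde M$ and conclude. Classical Seifert theory says a closed Seifert fibered space over $S^2$ with at most three exceptional fibers and finite $\pi_1$ is a spherical space form $S^3/\Gamma$ with $\Gamma$ acting freely and orthogonally. Hence its universal cover is $S^3$, via Seifert's classification and the fact that finite-$\pi_1$ Seifert manifolds carry $S^3$ geometry. This avoids Perelman entirely. Waldhausen's theorem then says every Heegaard splitting of $S^3$ is standard, i.e.\ a stabilization of the genus-zero splitting. Therefore the lifted genus-$(n+1)$ splitting is standard, and so is the associated splitting epimorphism. The only care needed is that ``standard'' for epimorphisms means equivalence under $\mathrm{Aut}$ of the surface group and of the factors, which is exactly what a homeomorphism of Heegaard splittings induces.

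The main obstacle is Step 1: showing that the algebraically defined $\alpha(k,\ell,m)$ is genuinely induced by a Heegaard splitting, and pinning down the Seifert invariants. This means writing explicit simple closed curves on $S_2$ realizing the kernels of both coordinate maps, and showing they form a Heegaard diagram recognizable as a Seifert fibration. I would first try the known genus-two diagrams of Seifert manifolds $M(0;(a_1,b_1),(a_2,b_2),(a_3,b_3))$ from Boileau--Zieschang and match them to Olshanskii's formulas. Failing that, I would compute the pushout presentation directly and recognize it as a triangle-group central extension.
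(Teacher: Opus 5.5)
Your proposal is correct in outline and follows the paper's proof. It realizes $\alpha(k,\ell,m)$ as the splitting homomorphism of a genus-two splitting of a Seifert manifold over $S^2$ with at most three exceptional fibers. It then lifts to the universal cover: the handlebody preimages are connected with $\pi_1(\widetilde U_i)=N_i$, the surface group is $H$, and the genus is $n+1$. Finally it identifies the cover with $S^3$ classically and applies Waldhausen. For your Step 3 the paper argues slightly differently but equivalently. It pulls the fibration back along the orbifold universal cover $S^2\to S^2(|k|,|\ell|,|m|)$ and obtains a circle bundle that must be $L(|e|,1)$ with $e\neq 0$, rather than citing Seifert's classification of spherical space forms.

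The real work is the Step 1 you flag, and there the verification routes you name would not suffice. ``Matching the presentation of $Q(k,\ell,m)$ with the standard one'', or computing the pushout and recognizing a central extension of a triangle group, identifies only $\pi_1(M)$. Passing from ``$\pi_1(M)$ is finite'' to ``$\widetilde M\cong S^3$'' is exactly the Poincar\'e conjecture, which the argument is meant to avoid. What is needed is to identify the marked splitting (or at least the manifold) with a known Seifert splitting. In other words, you must show that the splitting homomorphism of an explicit vertical Seifert splitting is equivalent to $\alpha$. Your Boileau--Zieschang matching would do this, but it is left unexecuted.

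The paper does it in Proposition~\ref{prop:marked-splitting} with an open book whose page is a pair of pants $P$ and whose monodromy is $\tau_1^{-k}\tau_2^{-\ell}\tau_3^{m}$. Cutting the circle direction into two arcs gives two copies of $P\times I$ glued along the double $D(P)\cong S_2$. The doubled arcs give the meridian words $a_1^k(a_1a_2)^{-m}$ and $a_2^{\ell}(a_1a_2)^{-m}$, and the basepoint whisker $V$ then reproduces Olshanskii's table exactly.
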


The proof is classical and geometric.  We identify the original marked
genus-two splitting with a Seifert fibered space over a spherical
$2$-orbifold.  Its universal cover is $S^3$ by the classical theory of
Seifert fibered spaces.  The lifted splitting is precisely the finite-cover
epimorphism constructed by Olshanskii, so Waldhausen's theorem makes it
standard.  No part of this argument invokes Perelman's theorem or the
general Poincar\'e theorem.

The companion paper \cite{KharlampovichVdovina} develops a broader
algebraic and geometric program for coordinate-surjective homomorphisms,
balanced presentations, and one-layer splitting epimorphisms.  It announces
the standardness of the Olshanskii epimorphisms.  The present paper isolates
that classical family, gives the complete marked-cover proof, and explains
how it is related to Olshanskii's original diagrammatic construction.  The
proof here is complementary to the one-layer methods of
\cite{KharlampovichVdovina}: the lifted maps need not be literal path
one-layer maps in their first Schreier bases.

The paper is organized as follows.  Section~\ref{sec:prelim} fixes the
notions of equivalence and standardness.  Section~\ref{sec:construction}
reconstructs Olshanskii's homomorphism and its finite-cover restriction.
Sections~\ref{sec:seifert} and \ref{sec:proof} identify the marked Seifert
splitting and prove the main theorem.  Section~\ref{sec:diagrams} compares
the argument with homomorphism diagrams and with the companion paper.
Section~\ref{sec:genus9} gives a fully explicit genus-nine construction.

\section{Splitting homomorphisms and standardness}\label{sec:prelim}

We use the conventions
\[
 [u,v]=u^{-1}v^{-1}uv,
 \qquad
 u^v=v^{-1}uv.
\]
Put
\[
 \Gamma_g=\pi_1(S_g)=
 \left\langle x_1,y_1,\ldots,x_g,y_g\ \middle|\
 [x_1,y_1]\cdots[x_g,y_g]=1\right\rangle.
\]

\begin{definition}
Two homomorphisms $\phi,\psi:\Gamma_g\to G$ are \emph{equivalent} if
there are $\sigma\in\Aut(\Gamma_g)$ and $\tau\in\Aut(G)$ such that
\[
 \phi=\tau\circ\psi\circ\sigma.
\]
A homomorphism $\Gamma_g\to F_g\times F_g$ is
\emph{coordinate-surjective} if both coordinate homomorphisms are onto.
\end{definition}

Let $F_A=F(a_1,\ldots,a_g)$ and $F_B=F(b_1,\ldots,b_g)$.  The canonical
splitting epimorphism is
\begin{equation}\label{eq:canonical}
 \varepsilon_g:\Gamma_g\longrightarrow F_A\times F_B,
 \qquad
 \varepsilon_g(x_i)=(a_i,1),
 \qquad
 \varepsilon_g(y_i)=(1,b_i).
\end{equation}
It is the splitting homomorphism of the standard genus-$g$ Heegaard
splitting of $S^3$.

\begin{definition}\label{def:standard}
An epimorphism $\phi:\Gamma_g\twoheadrightarrow F_g\times F_g$ is
\emph{standard} if it is equivalent to \eqref{eq:canonical}.
\end{definition}

This is Olshanskii's definition in \cite[Section~3.1]{Olshanskii}, up to a
possible interchange of the two free factors.  Hempel's reformulation of
the Poincar\'e conjecture says that every splitting epimorphism is standard
if and only if the Poincar\'e conjecture holds; see
\cite{Hempel,GrigorchukKurchanov}.

We shall use the following basis criterion, in the form used in the
companion paper \cite[Section~2]{KharlampovichVdovina}.

\begin{lemma}[Basis criterion for standardness]\label{lem:basis-criterion}
Let
\[
 \phi:\pi_1(S_g)\longrightarrow
 F(b_1,\ldots,b_g)\times F(a_1,\ldots,a_g)
\]
be a splitting epimorphism of the form considered here, with
\[
 \phi(x_i)=(b_i,a_i).
\]
Suppose that $U$ is a surface-group automorphism such that
\[
 \phi U(y_i)=(1,c_i),\qquad i=1,\ldots,g,
\]
and that
\[
 (c_1,\ldots,c_g)
\]
is a free basis of $F(a_1,\ldots,a_g)$.  Write the original generators
$a_i$ as words in this basis,
\[
 a_i=u_i(c_1,\ldots,c_g),
\]
and write
\[
 \phi U(x_i)=(B_i,C_i),
 \qquad
 B_i\in F(b_1,\ldots,b_g),\quad
 C_i\in F(c_1,\ldots,c_g).
\]
Then $\phi$ is standard.  More precisely, after further precomposition by
a surface-group automorphism fixing the $y_i$'s, and after automorphisms of
the two target free factors, the map has the form
\[
 x_i\longmapsto (b_i,c_i),
 \qquad
 y_i\longmapsto (1,c_i).
\]
\end{lemma}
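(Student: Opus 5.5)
The plan is to normalize the two coordinates of $\psi=\phi U$ one at a time: first the factor $F(b_1,\ldots,b_g)$ by a target automorphism alone, then the factor $F(c_1,\ldots,c_g)$ by a surface automorphism that fixes every $y_i$. The standardness conclusion follows at the end from a single substitution.

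\emph{Step 1: the first coordinate.} Since $\psi(y_i)=(1,c_i)$, the first coordinate $\psi_1$ kills every $y_i$. Hence $\psi_1$ factors through the quotient $\Gamma_g/\langle\!\langle y_1,\ldots,y_g\rangle\!\rangle$. This quotient is the free group on the images of $x_1,\ldots,x_g$, because the surface relator becomes trivial there. The factored map sends the $i$-th free generator to $B_i$. It is onto, since $\psi$ is onto (as $U$ is an automorphism). So it is an epimorphism $F_g\to F_g$, and by the Hopf property of finitely generated free groups it is an isomorphism. Thus $(B_1,\ldots,B_g)$ is a free basis of $F(b_1,\ldots,b_g)$. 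Composing with the automorphism of that factor sending $B_i\mapsto b_i$, we may assume $\psi(x_i)=(b_i,C_i)$ and $\psi(y_i)=(1,c_i)$.

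\emph{Step 2: the second coordinate.} We want $\sigma\in\Aut(\Gamma_g)$ with $\sigma(y_i)=y_i$ and $\sigma(x_i)=x_i\,w_i(y_1,\ldots,y_g)$, where $w_i$ is obtained from $C_i^{-1}c_i$ by substituting $y_j$ for $c_j$; this makes sense because the $c_j$ form a basis. Such a $\sigma$ leaves the first coordinate $b_i$ unchanged, since $y$-words die there, and turns the second coordinate into $C_i\,C_i^{-1}c_i=c_i$, which is the asserted normal form. The needed input is the relation $\prod_i[C_i,c_i]=1$ in $F(c_1,\ldots,c_g)$, which is the image of the surface relator.

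I would realize $\sigma$ as a product of handle-slides and Dehn twists supported away from the $y$-curves, i.e.\ elements of the stabilizer of the $y$-system in the mapping class group. Concretely, I would try to show, by induction on $\sum_i|C_i|$ in the basis $(c_j)$, that any tuple $(C_1,\ldots,C_g)$ with $\prod[C_i,c_i]=1$ can be reduced to $(c_1,\ldots,c_g)$. The elementary moves are right multiplication of $C_i$ by $c_i^{\pm1}$ (a twist about $y_i$) and the slides $x_i\mapsto x_i\,(y_iy_j^{\pm1}\cdots)$ coming from bands joining the $i$-th and $j$-th handles. The justification would come from the classification of solutions of the quadratic equation $\prod[X_i,c_i]=1$ in a free group, in the Lyndon--Wicks/Grigorchuk--Kurchanov form. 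Equivalently, one can use Zieschang's theorem that an epimorphism $\Gamma_g\to F_g$ sending a geometric half-basis to a free basis is induced by a handlebody. Under that theorem, the $x$-curves may be replaced, within the $y$-stabilizer, by the complementary system whose images are $c_i$.

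Once both coordinates are normalized, standardness is immediate. The automorphism $x_i\mapsto x_iy_i^{-1}$, $y_i\mapsto y_i$ of $\Gamma_g$ (the inverse twists) sends the normalized map to $x_i\mapsto(b_i,1)$, $y_i\mapsto(1,c_i)$. This is \eqref{eq:canonical} up to interchanging the two factors, which the definition of standardness allows, as noted after Definition~\ref{def:standard}. The main obstacle is Step~2, namely showing that the required correction of the $x_i$ by $y$-words is realized by a genuine automorphism of $\Gamma_g$ fixing the $y_i$. I would attack it first through the Zieschang/handlebody route, since that gives geometric control. I would fall back on the length-reduction argument for $\prod[C_i,c_i]=1$ if needed.
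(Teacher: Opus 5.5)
Your overall architecture matches the paper's: normalize the $b$-factor by the Hopfian/basis argument, get a $y$-fixing surface automorphism from the solution theory of $\prod_i[X_i,c_i]=1$, and finish with inverse twists. The gap is in Step~2. The concrete $\sigma$ you commit to, $\sigma(x_i)=x_i\,w_i(y)$ with $w_i=(C_i^{-1}c_i)(y)$ (forced by your requirement), is in general not even an endomorphism of $\Gamma_g$.

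Here is a counterexample. Take $g=2$, $U=\mathrm{id}$, $\phi(x_i)=(b_i,a_i)$, $\phi(y_1)=(1,a_2)$, $\phi(y_2)=(1,a_2^{-1}a_1)$. This satisfies the hypotheses, with $c_1=a_2$, $c_2=a_2^{-1}a_1$ and $(C_1,C_2)=(c_1c_2,c_1)$. Your $\sigma$ is $x_1\mapsto x_1y_2^{-1}$, $x_2\mapsto x_2y_1^{-1}y_2$. Compose it with the homomorphism $\delta\colon x_i,y_i\mapsto c_i$; the surface relator $R$ goes to
\[
 [c_1c_2^{-1},c_1]\,[c_2c_1^{-1}c_2,c_2]
 =c_2c_1^{-1}c_2^{-1}c_1c_2^{-1}c_1c_2^{-1}c_1^{-1}c_2^{2}\neq1 .
\]
Hence $\sigma(R)\notin\langle\langle R\rangle\rangle$.

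A $y$-fixing automorphism that does the job does exist, e.g.
\[
 W(x_1)=y_1x_1y_2^{-1}y_1^{-1},\qquad W(x_2)=y_2^{2}x_2y_1^{-1}y_2^{-1},\qquad W(y_i)=y_i .
\]
It sends $R$ to $y_1y_2Ry_2^{-1}y_1^{-1}$ and satisfies $\psi_2W=\delta$. But it carries $y$-words on both sides of $x_i$, not a one-sided correction. So the ``main obstacle'' you set yourself is a false statement. What your length induction or the Zieschang/dual-disk argument can actually deliver is only that some $W$ fixing every $y_i$ with $\psi_2W=\delta$ exists. That is exactly the cited fact the paper's proof invokes.

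Once $\sigma$ is such an arbitrary $W$, your reason that the first coordinate stays $b_i$ (``$y$-words die there'') no longer applies, because you do not control the shape of $W(x_i)$. The paper avoids this by applying $W$ first and normalizing the $b$-factor afterwards; your Step~1 argument works verbatim at that point. Alternatively, you would have to prove that $y$-fixing automorphisms act trivially on $\Gamma_g/\langle\langle y_1,\ldots,y_g\rangle\rangle$.

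Finally, with the paper's convention $[u,v]=u^{-1}v^{-1}uv$, the inverse twist is $x_i\mapsto y_i^{-1}x_i$. Your $x_i\mapsto x_iy_i^{-1}$ turns $[x_1,y_1]$ into $y_1[x_1,y_1]y_1^{-1}$ and is not an automorphism for $g\ge2$. These last two points are easy repairs. The substantive correction is replacing your explicit $\sigma$ by the existence statement for $W$.
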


\begin{proof}
Since $(c_1,\ldots,c_g)$ is a free basis, there is an automorphism
\[
 \alpha\in\Aut F(a_1,\ldots,a_g)
\]
such that
\[
 \alpha(c_i)=a_i.
\]

Let
\[
 \delta:\pi_1(S_g)\longrightarrow F(c_1,\ldots,c_g),
 \qquad
 \delta(x_i)=\delta(y_i)=c_i.
\]
By the description of the solutions of the surface equation used in the
paper \cite{KharlampovichVdovina}, there is a surface-group automorphism $W$ fixing every
$y_i$ such that
\[
 C_i=\delta W^{-1}(x_i),\qquad i=1,\ldots,g.
\]
Consequently, after precomposition by $W$,
\[
 \phi U W(x_i)=(B_i',c_i),
 \qquad
 \phi U W(y_i)=(1,c_i)
\]
for some $B_i'\in F(b_1,\ldots,b_g)$.

The tuple $(B_1',\ldots,B_g')$ generates $F(b_1,\ldots,b_g)$, because
$\phi U W$ is an epimorphism and all the $y_i$ have trivial first
coordinate.  Since this is a generating $g$-tuple of the rank-$g$ free
group, it is a free basis.  An automorphism of the first target factor
therefore sends $B_i'$ to $b_i$.  We obtain
\[
 x_i\longmapsto (b_i,c_i),
 \qquad
 y_i\longmapsto (1,c_i).
\]
Finally, precomposing by the product of the inverse twists about the
$y_i$'s changes $x_i$ to $y_i^{-1}x_i$ and fixes $y_i$.  The map then has
the form
\[
 x_i\longmapsto (b_i,1),
 \qquad
 y_i\longmapsto (1,c_i).
\]
Applying $\alpha$ to the second target factor gives the canonical splitting
epimorphism, up to the harmless interchange of the two target factors.
Hence $\phi$ is standard.
\end{proof}

\section{The Olshanskii construction}\label{sec:construction}

Fix nonzero integers $k,\ell,m$.  Let
\[
 F_1=F(a_1,a_2),
 \qquad
 F_2=F(b_1,b_2),
 \qquad
 T=(a_1a_2)^m,
 \qquad
 V=T a_2^{1-\ell}.
\]
Olshanskii's genus-two homomorphism is given by
\begin{equation}\label{eq:olsh-table}
\begin{array}{c|cccc}
 &x_1&y_1&x_2&y_2\\ \hline
F_1&
 a_1&(a_2a_1)^m a_1^{-k}&
 Va_2^{-1}V^{-1}&
 Va_2^{\ell}(a_1a_2)^{-m}V^{-1}\\[1mm]
F_2&b_1&1&b_2^{-1}&1.
\end{array}
\end{equation}
It is convenient to denote this map by $\alpha=\alpha(k,\ell,m)$ and its
coordinates by $\alpha_1,\alpha_2$.  The surface relation can be checked
directly; it will also follow from the marked Heegaard construction in
Proposition~\ref{prop:marked-splitting}.

\begin{lemma}\label{lem:coordinate-surjective}
Both coordinate homomorphisms in \eqref{eq:olsh-table} are onto.
\end{lemma}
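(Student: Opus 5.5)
The second coordinate is immediate: $\alpha_2(x_1)=b_1$ and $\alpha_2(x_2)=b_2^{-1}$, so the image of $\alpha_2$ contains $b_1$ and $b_2$ and is all of $F_2$. The work is in the first coordinate. There I would show that the subgroup
\[
 H=\alpha_1(\Gamma_2)\le F_1
\]
contains $a_1$ and $a_2$, using only the four images in \eqref{eq:olsh-table}. No Nielsen-reduction machinery should be needed; the plan is to peel off conjugators.

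First, $a_1=\alpha_1(x_1)\in H$. Next, $\alpha_1(y_1)=(a_2a_1)^m a_1^{-k}$, so multiplying by $a_1^{k}$ on the right gives $(a_2a_1)^m\in H$. Conjugating by $a_1\in H$ gives
\[
 a_1(a_2a_1)^m a_1^{-1}=(a_1a_2)^m=T,
\]
so $T\in H$.

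Now look at the $x_2,y_2$ entries. Their product is
\[
 \alpha_1(x_2)\alpha_1(y_2)=Va_2^{-1}a_2^{\ell}(a_1a_2)^{-m}V^{-1}=Va_2^{\ell-1}T^{-1}V^{-1}.
\]
Since $V=Ta_2^{1-\ell}$, we have $Va_2^{\ell-1}=T$. The product therefore equals
\[
 T\,T^{-1}V^{-1}=V^{-1}=a_2^{\ell-1}T^{-1}.
\]
Hence $V\in H$, and so $a_2^{1-\ell}=T^{-1}V\in H$. Finally, because $V\in H$ and $\alpha_1(x_2)=Va_2^{-1}V^{-1}\in H$, conjugating back by $V$ gives $a_2^{-1}\in H$. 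Thus $a_1,a_2\in H$ and $\alpha_1$ is onto.

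The one step that needs care is the cancellation $Va_2^{\ell-1}=T$, which is what lets the product of the $x_2$ and $y_2$ images collapse to $V^{-1}$. It should be checked against the paper's exact definition $V=Ta_2^{1-\ell}$ and its commutator and conjugation conventions. If the order of factors in that product were wrong, I would instead try $\alpha_1(y_2)\alpha_1(x_2)$, or conjugate $\alpha_1(y_2)$ by $\alpha_1(x_2)^{\pm1}$, until a bare $V^{\pm1}$ appears. None of the argument uses the surface relation, so it does not depend on Proposition~\ref{prop:marked-splitting}.
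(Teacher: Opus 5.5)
Your argument is correct and is essentially the paper's. Writing $X=\alpha_1(x_2)$ and $Y=\alpha_1(y_2)$, the paper obtains $V$ as $(X^{\ell}Y)^{-1}X^{\ell-1}$, which simplifies to $(XY)^{-1}$, the inverse of your product $XY=V^{-1}$, and it then recovers $a_2=V^{-1}X^{-1}V$ exactly as you do. Your detour through $\alpha_1(y_1)$ to get $T$ (and $a_2^{1-\ell}$) is valid but unnecessary, because $XY=V^{-1}$ is an identity of words and does not need $T\in H$.
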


\begin{proof}
The second coordinate is visibly onto.  In the first coordinate put
\[
 X=Va_2^{-1}V^{-1},
 \qquad
 Y=Va_2^{\ell}T^{-1}V^{-1}.
\]
Then
\[
 X^{\ell}Y=VT^{-1}V^{-1}
\]
and
\[
 (X^{\ell}Y)^{-1}X^{\ell-1}
 =VT a_2^{1-\ell}V^{-1}=V.
\]
Hence $V$ lies in the image, and therefore so does
\[
 a_2=V^{-1}X^{-1}V.
\]
The generator $a_1$ is already the image of $x_1$.
\end{proof}

Let
\[
 K=\alpha(\Gamma_2),
 \qquad
 N_1=K\cap(F_1\times\{1\}),
 \qquad
 N_2=K\cap(\{1\}\times F_2),
\]
and identify $N_i$ with a subgroup of $F_i$.  Since \(K\) projects onto both factors, the subgroups $N_1, N_2$
are normal in \(F_1\) and \(F_2\), respectively.  For \(u\in F_1\),
choose \(v\in F_2\) such that \((u,v)\in K\), and set
\[
\theta(u)=vN_2.
\]
This is well defined, surjective, and has kernel \(N_1\).  Hence it
induces an isomorphism
\[
F_1/N_1\cong F_2/N_2.
\]
Identifying these quotients with a common group \(Q\), and denoting the
quotient maps by \(q_i:F_i\to Q\), we obtain
\begin{equation}\label{eq:fiber-product}
 K=\{(u,v)\in F_1\times F_2\mid q_1(u)=q_2(v)\}.
\end{equation}
Equivalently,
\[
 Q\cong K/(N_1\times N_2)
   \cong F_1/N_1
   \cong F_2/N_2.
\]

The kernel of the standard handlebody epimorphism $\alpha_2$ is normally
generated by $y_1,y_2$.  Since $\alpha_1$ is onto, $N_1$ is therefore the
normal closure in $F_1$ of the two first-coordinate $y$-images.  Inverting
and conjugating the first one, and removing the whisker $V$ from the second,
shows that this normal closure is generated by
\[
 a_1^kT^{-1},
 \qquad
 a_2^{\ell}T^{-1}.
\]
Consequently
\begin{equation}\label{eq:Q}
 Q=Q(k,\ell,m)=
 \left\langle a_1,a_2\ \middle|\
 a_1^k=(a_1a_2)^m=a_2^{\ell}\right\rangle.
\end{equation}
The common value
\[
 h=a_1^k=a_2^{\ell}=(a_1a_2)^m
\]
is central.  Modulo $\langle h\rangle$ one obtains the triangle group
\[
 \left\langle \bar a_1,\bar a_2\ \middle|\
 \bar a_1^k=\bar a_2^{\ell}=(\bar a_1\bar a_2)^m=1\right\rangle.
\]
Olshanskii records that $Q(k,\ell,m)$ is finite precisely in the spherical
cases
\[
 \frac1{|k|}+\frac1{|\ell|}+\frac1{|m|}>1,
\]
and that these finite groups are cyclic groups or central extensions of
dihedral groups, $A_4$, $S_4$, and $A_5$
\cite[Section~3.3]{Olshanskii}.

Assume from now on that $Q$ is finite of order $n$.  The two compositions
$q_1\alpha_1$ and $q_2\alpha_2$ agree by \eqref{eq:fiber-product}; denote
their common value by
\[
 \rho:\Gamma_2\twoheadrightarrow Q.
\]
Put
\[
 H=\ker\rho=\alpha^{-1}(N_1\times N_2).
\]
Then $[\Gamma_2:H]=n$, so $H$ is the fundamental group of a regular
$n$-sheeted cover of $S_2$.  Since $\chi(S_2)=-2$, this cover has genus
$n+1$:
\[
 H\cong\pi_1(S_{n+1}).
\]
Also $[F_i:N_i]=n$, and the Nielsen--Schreier formula gives
\[
 \rank N_i=1+n(2-1)=n+1.
\]
Finally, $N_1\times N_2\subseteq K$, and $\alpha$ is onto $K$; hence the
restriction
\begin{equation}\label{eq:beta}
 \beta(k,\ell,m)=\alpha|_H:
 H\twoheadrightarrow N_1\times N_2
\end{equation}
is onto.

\begin{definition}
The epimorphisms \eqref{eq:beta}, for the triples for which $Q(k,\ell,m)$
is finite, are called \emph{Olshanskii epimorphisms}.
\end{definition}

With this terminology, Theorem~\ref{thm:intro-main} becomes the following
precise statement.

\begin{theorem}\label{thm:main}
Let $Q(k,\ell,m)$ be finite of order $n$.  After choosing free bases of
$N_1$ and $N_2$, the Olshanskii epimorphism
\[
 \beta(k,\ell,m):\pi_1(S_{n+1})\twoheadrightarrow
 F_{n+1}\times F_{n+1}
\]
is standard.
\end{theorem}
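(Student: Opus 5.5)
We realize $\alpha=\alpha(k,\ell,m)$ geometrically as the splitting homomorphism of a genus-two Heegaard splitting $M=U_1\cup_{S_2}U_2$. We then identify the lifted splitting of the $n$-sheeted cover $\widetilde M\to M$ with $\beta(k,\ell,m)$, and show $\widetilde M\cong S^3$. Waldhausen's theorem then says every Heegaard splitting of $S^3$ is standard, and this gives standardness of $\beta$.

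\emph{Step 1 (marked splitting).} I would build $U_2$ as the handlebody with $\pi_1(U_2)=F(b_1,b_2)$ in which $y_1,y_2$ bound meridian disks; this matches the second row of \eqref{eq:olsh-table}, which is the standard handlebody map up to $b_2\mapsto b_2^{-1}$. For $U_1$ I need two disjoint simple closed curves on $S_2$ that bound disks and realize the kernel of $\alpha_1$. The natural candidates are the curves representing the elements whose images are trivial. Here I would use that $\ker\alpha_1$ is normally generated by two elements that can be chosen simple and disjoint; to produce them I would apply an explicit automorphism of $\Gamma_2$ carrying $\alpha_1$ to the standard map. The gluing then gives a closed $3$-manifold $M$ with
\[
\pi_1(M)=\Gamma_2/\langle\langle \ker\alpha_1,\ker\alpha_2\rangle\rangle
\cong F_1/N_1\cong Q(k,\ell,m).
\]
I expect to recognize $M$ as the Seifert fibered space over $S^2$ with exceptional fibers of orders $|k|,|\ell|,|m|$. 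The relations $a_1^k=a_2^\ell=(a_1a_2)^m=h$ with $h$ central are exactly its standard presentation, and the splitting should be the usual genus-two vertical splitting. The surface relation for \eqref{eq:olsh-table} comes for free from this construction.

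\emph{Step 2 (cover).} The homomorphism $\rho:\Gamma_2\to Q$ is the composite $\pi_1(S_2)\to\pi_1(M)\cong Q$. Let $p:\widetilde M\to M$ be the cover with group $Q$; it is the universal cover. The surface $S_2$ lifts to a connected surface $\widetilde S$ with $\pi_1(\widetilde S)=H$, and $\widetilde S$ splits $\widetilde M$ into $\widetilde U_1=p^{-1}(U_1)$ and $\widetilde U_2=p^{-1}(U_2)$. These preimages are connected because each $\alpha_i$ is onto. The map $\pi_1(\widetilde U_i)\to\pi_1(U_i)=F_i$ has image $\ker q_i=N_i$, so the splitting homomorphism of $\widetilde M=\widetilde U_1\cup\widetilde U_2$ is $\alpha|_H=\beta$, once free bases of $N_1,N_2$ are chosen. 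Each $\widetilde U_i$ is a handlebody of genus $n+1$.

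\emph{Step 3 (universal cover is $S^3$).} Since $\frac1{|k|}+\frac1{|\ell|}+\frac1{|m|}>1$, classical Seifert theory applies. $M$ is a Seifert fibered space over a spherical orbifold with finite $\pi_1$, and so it is a spherical space form $S^3/Q$; hence $\widetilde M\cong S^3$. Concretely, one can pass to a finite orbifold cover with no exceptional fibers, get a circle bundle over $S^2$ with finite $\pi_1$, i.e. a lens space, and then lift to $S^3$. This uses no Perelman. Waldhausen's theorem then makes the genus-$(n+1)$ splitting standard. Its splitting homomorphism is therefore equivalent to $\varepsilon_{n+1}$, with equivalence taken in the sense of Section~\ref{sec:prelim} and a homeomorphism of the Heegaard surface inducing $\sigma$. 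Thus $\beta$ is standard.

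The main obstacle is Step 1. I must check that the first-row images really arise from disjoint meridian curves of some handlebody, i.e. that $\alpha_1$ is geometric and not merely onto. I would first try to write down an explicit automorphism of $\Gamma_2$, built from Dehn twists, that sends $\alpha_1$ to $x_i\mapsto a_i$, $y_i\mapsto 1$ up to $\Aut(F_1)$. The conjugators $V$ and the power structure $(a_2a_1)^m a_1^{-k}$ suggest twists along the curves $x_1$ and $x_2$ composed with a handle slide.
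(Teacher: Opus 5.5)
Your Steps 2 and 3 are the paper's argument. You lift to the universal cover, note that $p^{-1}(U_i)$ is connected because each $\alpha_i$ is onto, identify $\pi_1(\widetilde U_i)=N_i$ and the lifted splitting homomorphism with $\beta$, and apply Waldhausen.

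The gap is in Step 1, and it is not where you locate it. That $\alpha_1$ is geometric needs no explicit twists. By the classical Zieschang--Jaco realization theorem, every epimorphism $\pi_1(S_g)\to F_g$ is induced, up to $\Aut(F_g)$, by a handlebody bounded by $S_g$. So a genus-two splitting $M=U_1\cup_{S_2}U_2$ realizing $\alpha$ exists. Moreover $M$ is determined by $\alpha$: by Dehn's lemma, a simple closed curve bounds a disk in $U_i$ exactly when it represents an element of $\ker\alpha_i$.

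What you never prove is that this $M$ is the Seifert fibered space. You only match a presentation of $\pi_1(M)\cong Q(k,\ell,m)$ with the Seifert presentation, and say that the splitting ``should be'' the vertical one. An isomorphism of fundamental groups does not identify the manifold by classical means. Concluding $\widetilde M\cong S^3$ from $\pi_1(M)\cong Q$ finite is exactly elliptization, the Perelman input the argument is meant to avoid. Likewise, an automorphism of $\Gamma_2$ standardizing $\alpha_1$ would only produce an explicit genus-two Heegaard diagram for $M$, which you would still have to recognize. As written, Step 3 rests on an unproved identification.

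The paper avoids the recognition problem by going in the opposite direction (Proposition~\ref{prop:marked-splitting}).
\begin{itemize}
\item It starts from $M_S(k,\ell,m)$ and writes it as the open book with pair-of-pants page $P$ and monodromy $\tau_1^{-k}\tau_2^{-\ell}\tau_3^{m}$.
\item An isotopy from the identity to this monodromy identifies the mapping torus with $P\times S^1$. The meridians of the filling solid tori become $c_1^kh^{-1}$, $c_2^{\ell}h^{-1}$, $c_3^{-m}h^{-1}$.
\item Splitting the parameter circle into two arcs gives two copies of $P\times I$ with common boundary $D(P)\cong S_2$.
\item The doubles of the arcs $d_1,d_2$ bound disks on one side. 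On the other side they read $\mu_1=a_1^kT^{-1}$ and $\mu_2=a_2^{\ell}T^{-1}$, where $T=(a_1a_2)^m$.
\item The canonical system $x_1=a_1$, $y_1=a_1^{-1}\mu_1^{-1}a_1$, $x_2=Va_2^{-1}V^{-1}$, $y_2=V\mu_2V^{-1}$, with whisker $V=\mu_2^{-1}a_2=Ta_2^{1-\ell}$, then reproduces \eqref{eq:olsh-table} exactly.
\end{itemize}
The homomorphism is computed from a manifold already known to be Seifert fibered, so nothing has to be recognized.

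A smaller point concerns Step 3. Your ``concrete'' route through a manifold cover of the base orbifold fails when one of $|k|,|\ell|,|m|$ equals $1$ and the other two differ, e.g.\ $(1,2,3)$. There $S^2(p,q)$ with $p\neq q$ is a bad orbifold and has no manifold cover. In that case, as in the paper, use that $M$ has at most two exceptional fibers. It is then a union of two solid tori, hence a lens space, since $S^2\times S^1$ is excluded by finiteness of $Q$.
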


\section{The marked Seifert splitting}\label{sec:seifert}

Let $P$ be a pair of pants with oriented boundary curves
$c_1,c_2,c_3$ satisfying
\[
 c_1c_2c_3=1.
\]
Let $h$ denote the positive fiber of $P\times S^1$.  Fill the three boundary
tori of $P\times S^1$ so that the meridians of the filling solid tori are
\begin{equation}\label{eq:slopes}
 c_1^k h^{-1},
 \qquad
 c_2^{\ell}h^{-1},
 \qquad
 c_3^{-m}h^{-1}.
\end{equation}
Denote the resulting Seifert fibered space by $M_S(k,\ell,m)$.
Its fundamental group has the presentation
\begin{equation}\label{eq:seifert-presentation}
\begin{aligned}
\pi_1(M_S)=\langle c_1,c_2,c_3,h\mid{}&[c_i,h]=1,
 c_1c_2c_3=1,\\
&c_1^k=h,\quad c_2^{\ell}=h,\quad c_3^{-m}=h\rangle.
\end{aligned}
\end{equation}
Eliminating $c_3=(c_1c_2)^{-1}$ and $h$ gives \eqref{eq:Q}.  We need more
than an isomorphism of fundamental groups: we must identify the marked
Heegaard splitting.

\begin{figure}[t]
\centering
\begin{tikzpicture}[scale=0.92,>=Stealth]
  \draw[thick] (0,0) ellipse (3.25 and 2.25);
  \draw[thick] (-1.15,0.55) circle (0.66);
  \draw[thick] (1.15,0.55) circle (0.66);
  \coordinate (p) at (0,-2.12);
  \fill (p) circle (1.7pt);
  \draw[thick] (p) .. controls (-0.7,-1.25) and (-1.2,-0.65) .. (-1.15,-0.12);
  \draw[thick] (p) .. controls (0.7,-1.25) and (1.2,-0.65) .. (1.15,-0.12);
  \node at (-1.15,0.55) {$c_1$};
  \node at (1.15,0.55) {$c_2$};
  \node at (2.75,-1.45) {$c_3$};
  \node[left=2pt] at (-0.63,-0.92) {$d_1$};
  \node[right=2pt] at (0.63,-0.92) {$d_2$};
  \node[below=2pt] at (p) {basepoint};
  \draw[->] (-1.82,0.84) arc[start angle=155,end angle=465,radius=0.7];
  \draw[->] (0.48,0.84) arc[start angle=155,end angle=465,radius=0.7];
  \draw[->] (3.07,-0.65) arc[start angle=-15,end angle=40,x radius=3.25,y radius=2.25];
\end{tikzpicture}
\caption{The pair of pants $P$.  The arcs $d_1,d_2$ join $c_3$ to
$c_1,c_2$ and cut $P$ into a disk.  Their doubles give a complete disk
system in the genus-two open-book splitting.}
\label{fig:pants}
\end{figure}

Let $\tau_i$ be a positive Dehn twist in a collar of $c_i$ and put
\begin{equation}\label{eq:monodromy}
 \varphi=\tau_1^{-k}\tau_2^{-\ell}\tau_3^{m}.
\end{equation}
We now describe the construction.  Start with $P\times[0,1]$, a one-parameter family of copies of
$P$.  Identify the last copy with the first by
\[
 (p,1)\sim(\varphi(p),0).
\]
The copies $P\times\{t\}$ are called the \emph{pages}, and $\varphi$ records
how the last page is attached back to the first.  The quotient is denoted
$T_\varphi$.  Since $P$ has three boundary circles, $T_\varphi$ has three
boundary tori.  On the torus corresponding to $c_i$, follow one point of
$c_i$ while $t$ runs once from $0$ to $1$; this gives a circle $\lambda_i$.
Attach a solid torus to each boundary torus so that $\lambda_i$ bounds a
meridian disk in the attached solid torus.  After the three attachments the
resulting $3$-manifold is closed.  This closed manifold is what is meant by
the open book with page $P$ and return map $\varphi$.

There is a simple genus-two splitting of this manifold.  Divide the
parameter circle into two closed arcs.  The pages over either arc, together
with the corresponding halves of the three attached solid tori, form a copy
of $P\times I$.  The disks $d_1\times I$ and $d_2\times I$ cut $P\times I$
into a $3$-ball, because $d_1,d_2$ cut $P$ into a disk.  Hence each half is
a genus-two handlebody.  Their common boundary consists of two copies of
$P$ joined along all three boundary circles.  This surface is the double
$D(P)$.  Since $\chi(P)=-1$, one has $\chi(D(P))=-2$, and therefore
\[
 D(P)\cong S_2,
\]
where $S_2$ denotes the closed orientable surface of genus two.

\begin{figure}[t]
\centering
\resizebox{0.97\textwidth}{!}{%
\begin{tikzpicture}[
  box/.style={draw,rounded corners,align=center,inner sep=6pt,text width=36mm},
  smallbox/.style={draw,rounded corners,align=center,inner sep=5pt,text width=31mm},
  >=Stealth]
  \node[box] (page) at (0,2.2) {copies of $P$ indexed by $[0,1]$\\
  glue the last to the first by $\varphi$};
  \node[box] (torus) at (5.1,2.2) {$T_\varphi$: the copies of $P$\\arranged around a circle};
  \node[box] (manifold) at (11.2,2.2) {$M_S(k,\ell,m)$\\after attaching three solid tori};
  \draw[->,thick] (page) -- (torus);
  \draw[->,thick] (torus) -- (manifold);

  \node[align=center,font=\small] (split) at (5.1,0.55)
  {divide the parameter circle\\into two closed arcs};
  \draw[->,thick] (torus) -- (split);

  \node[smallbox] (UA) at (2.9,-1.05) {$U_1\cong P\times I$};
  \node[smallbox] (UB) at (7.3,-1.05) {$U_2\cong P\times I$};
  \draw[->,thick] (split) -- (UA);
  \draw[->,thick] (split) -- (UB);

  \node[draw,rounded corners,align=center,inner sep=5pt,text width=43mm]
  (sigma) at (5.1,-2.65) {common boundary\\$D(P)\cong S_2$};
  \draw[->,thick] (UA) -- (sigma);
  \draw[->,thick] (UB) -- (sigma);
\end{tikzpicture}%
}
\caption{Building the closed $3$-manifold from copies of $P$, and
splitting it into two genus-two handlebodies.}
\label{fig:open-book}
\end{figure}

\begin{proposition}\label{prop:marked-splitting}
The open book \eqref{eq:monodromy} is the Seifert manifold
\(M_S(k,\ell,m)\).  Choose the identification of its Heegaard
surface with the standard surface \(S_2\), the canonical generators
\(x_1,y_1,x_2,y_2\), and the free bases
\[
\pi_1(U_1)=F(a_1,a_2),\qquad
\pi_1(U_2)=F(b_1,b_2)
\]
described below.  With these choices, the splitting homomorphism is
exactly the homomorphism \eqref{eq:olsh-table}.
\end{proposition}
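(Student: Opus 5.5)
The plan has two parts: first identify the open book with $M_S(k,\ell,m)$, then compute the marked splitting homomorphism. For the first part, I would use the standard fact that the open book with page $P$ and monodromy $\varphi$ is obtained from the mapping torus $T_\varphi$ by filling along the longitudes $\lambda_i$. The mapping torus of a product of boundary-parallel twists is $P\times S^1$, reglued. Concretely, since each $\tau_i$ is supported in a collar of $c_i$, I would isotope $\varphi$ to the identity on the interior of $P$. This lets me push the twisting into the collars, which gives an identification $T_\varphi\cong P\times S^1$ with fiber $h=\{pt\}\times S^1$. Under this identification, the curve $\lambda_i$ (one point of $c_i$ followed around the circle) becomes $h\,c_i^{\pm e_i}$, where $e_i$ is the exponent of $\tau_i$ in $\varphi$. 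Choosing orientations so that the signs match, the meridians become $c_1^{k}h^{-1}$, $c_2^{\ell}h^{-1}$, $c_3^{-m}h^{-1}$, which are exactly the slopes \eqref{eq:slopes}. Hence the open book is $M_S(k,\ell,m)$. I would fix the sign convention once, by a local computation in the collar $c\times[0,1]\times S^1$.

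For the second part, I would describe the marking explicitly and compute the homomorphism.
\begin{itemize}
\item $U_1$ is $P\times[0,\tfrac12]$ together with half-tori, and its $\pi_1$ is $\pi_1(P)=F(c_1,c_2)$. I set $a_1=c_1$ and $a_2$ equal to $c_2$ possibly inverted/conjugated, to match the table.
\item $U_2=P\times[\tfrac12,1]$ has $\pi_1(U_2)\cong\pi_1(P)$ as well. However, the identification between the two pages goes through $\varphi$, and this is where the nontrivial words arise.
\item On $D(P)=P\times\{0\}\cup P\times\{\tfrac12\}$, the canonical generators are taken as follows. The curves $y_1,y_2$ are the doubles of the arcs $d_1,d_2$ on the $U_2$ side; these are meridians of $U_2$, hence map to $1$ in $F(b_1,b_2)$. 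The curves $x_1,x_2$ are the curves $c_1$ and $c_2$ (or $c_2^{-1}$) on one page, suitably whiskered to the basepoint.
\end{itemize}
Then $\alpha_2(x_1)=b_1$, $\alpha_2(x_2)=b_2^{-1}$, and $\alpha_2(y_i)=1$, matching the second row of \eqref{eq:olsh-table}. For the first coordinate, I would read the image in $\pi_1(U_1)$ of each double $d_i\cup\bar d_i$. Such a loop traverses $d_i$ in page $0$ and returns along $d_i$ in page $1$, which is identified with page $0$ via $\varphi$. It therefore represents $d_i^{-1}\varphi(d_i)$ in $\pi_1(P)$. Inside $U_1$, the half solid tori kill nothing extra, since they only thicken collars. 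The remaining work is to compute the words $\varphi(d_i)\,d_i^{-1}$ in $F(c_1,c_2)$. Twisting $d_1$ (which runs from $c_3$ to $c_1$) by $\tau_1^{-k}$ and $\tau_3^{m}$ produces $c_1^{-k}$ at one end and $(c_1c_2)^{m}$ (that is, $c_3^{-m}$) at the other. This gives $(a_2a_1)^m a_1^{-k}$, up to the placement of the basepoint, which decides between $a_1a_2$ and $a_2a_1$. Likewise, $d_2$ gives a conjugate of $a_2^{\ell}(a_1a_2)^{-m}$, and the conjugator $V$ together with the image $Va_2^{-1}V^{-1}$ of $x_2$ come from the whiskers. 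These whiskers connect the basepoint on $c_3$ to $c_2$ after passing through the twisted region.

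Finally, I would check that the surface relation holds for the chosen curves: $x_1,y_1,x_2,y_2$ must actually form a standard geometric symplectic basis of $D(P)$, which I would verify from intersection numbers and the cyclic order at the basepoint. Any discrepancy would be absorbed by adjusting the free bases by inner automorphisms and inversions, which the statement allows since it says the bases are ``described below''.

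The main obstacle is this bookkeeping. The orientation and sign conventions for $\tau_i$ and for $\lambda_i$, together with the basepoint paths, must produce exactly $T=(a_1a_2)^m$ and the whisker $V=Ta_2^{1-\ell}$, rather than some conjugate or inverse of them. I would first compute everything with the basepoint on $c_3$ as in Figure~\ref{fig:pants}. Then I would choose the whisker for $x_2$ along $d_2$ followed by the twisted image of $d_2$, and check that its image is $V$. If this fails, I would alter $a_2$ by a conjugation and compensate for it.
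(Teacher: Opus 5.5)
Your outline follows the paper's proof step for step. The isotopy from the identity to $\varphi$ (letting the boundary rotate) identifies $T_\varphi$ with $P\times S^1$ and reads off the slopes. The meridians of $U_2$ are the twisted doubles, whose words in $\pi_1(U_1)=\pi_1(P)$ come from the endpoint twists: $\mu_1=a_1^kT^{-1}$ and $\mu_2=a_2^{\ell}T^{-1}$. The longitudes are $c_1$ and $c_2^{-1}$.

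\textbf{The gap.} What is missing is the step you set aside as bookkeeping. It is the only place where the literal table, and in particular $V$, comes from. The missing idea is the choice of conjugator for the second pair. The paper takes
\[
 \delta=\mu_2^{-1}a_2,\qquad
 x_1=a_1,\quad y_1=a_1^{-1}\mu_1^{-1}a_1,\quad
 x_2=\delta a_2^{-1}\delta^{-1},\quad y_2=\delta\mu_2\delta^{-1}.
\]
\begin{itemize}
\item Since $\mu_2$ bounds a disk in $U_2$ and $a_2\mapsto b_2$, we get $\delta\mapsto b_2$, which commutes with $b_2^{-1}$. Hence $x_2\mapsto b_2^{-1}$ and $y_2\mapsto 1$ exactly.
\item In $F(a_1,a_2)$, $\delta=Ta_2^{-\ell}a_2=Ta_2^{1-\ell}=V$, and $y_1=a_1^{-1}Ta_1^{1-k}=(a_2a_1)^ma_1^{-k}$. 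This gives the first row.
\end{itemize}
Your assertion $\alpha_2(x_2)=b_2^{-1}$ is made before any whisker is fixed, and it holds only for whiskers that map to powers of $b_2$. Your candidate whisker, ``along $d_2$ followed by the twisted image of $d_2$,'' is a closed loop, namely $\mu_2^{\pm1}$. It becomes $\delta$ only when it is oriented as $\mu_2^{-1}$ and followed by $a_2$.

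\textbf{The fallback does not work.} The statement asks for one choice of bases in which all four images are literally the table. Changing the whisker of the second pair conjugates $\alpha_1(x_2)$ and $\alpha_1(y_2)$ but leaves $\alpha_1(x_1)$ and $\alpha_1(y_1)$ unchanged. A change of free basis of $F(a_1,a_2)$ acts on all four images at once. For example, when $m=1$, the elements $a_1$ and $a_2a_1^{1-k}$ already generate $F(a_1,a_2)$, so the only basis change fixing the first pair is the identity. Replacing $a_2$ by an arbitrary conjugate need not even give a basis. The correction therefore has to be made on the surface, by choosing $\delta$ as above.

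\textbf{A convention warning.} When you run your final check of the cyclic order, note that the table satisfies
\[
 x_1y_1x_1^{-1}y_1^{-1}\,x_2y_2x_2^{-1}y_2^{-1}
 =T(a_2a_1)^{-m}\cdot a_2Ta_2^{-1}T^{-1}=1
\]
in $F(a_1,a_2)$, using $(a_2a_1)^{-m}a_2=a_2T^{-1}$. By contrast, the relation written with the convention $[u,v]=u^{-1}v^{-1}uv$ of Section~\ref{sec:prelim} already fails for $k=\ell=m=1$. So a failure in that form is not evidence against your curves.
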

\begin{proof}
Put
\[
 q_1=k,\qquad q_2=\ell,\qquad q_3=-m.
\]
Allowing the boundary points to move, choose a continuous family of maps
$f_t:P\to P$ from the identity to $\varphi$ such that a point of $c_i$
makes $-q_i$ turns around $c_i$ during the motion.  Then
\[
 [p,t]\longmapsto(f_t(p),e^{2\pi i t})
\]
gives a homeomorphism from $T_\varphi$ to $P\times S^1$.  Under this
homeomorphism the circle $\lambda_i$ described above represents
$c_i^{-q_i}h$; reversing its orientation gives $c_i^{q_i}h^{-1}$.  For
$i=1,2,3$ these are exactly the three curves in \eqref{eq:slopes}.
Therefore attaching the three solid tori in the open-book construction gives
$M_S(k,\ell,m)$.

We now read the marking.  Choose the arcs $d_1,d_2$ in
Figure~\ref{fig:pants}.  Their doubles bound disks in one of the two
handlebodies $P\times I$ described above.  Let
\[
 a_1=c_1,\qquad a_2=c_2,
 \qquad T=(a_1a_2)^m.
\]
The endpoint twists of $d_1,d_2$ give the two meridian words in the other
handlebody:
\begin{equation}\label{eq:mu}
 \mu_1=a_1^kT^{-1},
 \qquad
 \mu_2=a_2^{\ell}T^{-1}.
\end{equation}
This is the standard doubled-arc calculation: the endpoint on $c_i$
contributes $c_i^{q_i}$, while the common endpoint on $c_3$ contributes
$c_3^{-m}=T$ with the opposite boundary orientation.

Put
\begin{equation}\label{eq:delta}
 \delta=\mu_2^{-1}a_2=T a_2^{1-\ell}=V.
\end{equation}
Place the basepoint in the band joining the two once-punctured-torus parts
of $D(P)$.  On the first part take the longitude $a_1$ and the oppositely
oriented meridian $\mu_1^{-1}$; on the second take the longitude $a_2^{-1}$
and the meridian $\mu_2$.  The corresponding based canonical system is
\begin{equation}\label{eq:canonical-system}
 x_1=a_1,
 \qquad
 y_1=a_1^{-1}\mu_1^{-1}a_1,
 \qquad
 x_2=\delta a_2^{-1}\delta^{-1},
 \qquad
 y_2=\delta\mu_2\delta^{-1}.
\end{equation}
The two pairs intersect once in their respective punctured tori, are
otherwise disjoint, and have disk complement.  Thus they form a canonical
system on $S_2$.

In the first handlebody, equations \eqref{eq:mu}--\eqref{eq:canonical-system}
give
\[
\begin{aligned}
 x_1&=a_1,\\
 y_1&=a_1^{-1}Ta_1^{1-k}=(a_2a_1)^m a_1^{-k},\\
 x_2&=Va_2^{-1}V^{-1},\\
 y_2&=Va_2^{\ell}T^{-1}V^{-1}.
\end{aligned}
\]
In the second handlebody the curves $\mu_1,\mu_2$ bound disks.  If
$b_1,b_2$ are the dual core generators, then $a_1\mapsto b_1$,
$a_2\mapsto b_2$, and $\delta=\mu_2^{-1}a_2\mapsto b_2$.  Therefore
\[
 x_1\mapsto b_1,
 \qquad y_1\mapsto1,
 \qquad x_2\mapsto b_2^{-1},
 \qquad y_2\mapsto1.
\]
This is precisely \eqref{eq:olsh-table}.
\end{proof}

\begin{remark}[The role of $V$]\label{rem:V}
The word $V$ is not an additional topological block.  It is the whisker
from the common basepoint to the second punctured-torus pair.  It cannot be
simply deleted from Olshanskii's based table, because it occurs in both
$x_2$ and $y_2$.  It disappears from the unbased attaching curve and from
the common quotient because conjugating a relator changes only its
whisker.
\end{remark}

\begin{remark}[A fixed-diagonal representative]\label{rem:fixed-diagonal}
Changing the canonical system on the same marked splitting gives an
equivalent representative
\begin{equation}\label{eq:fixed-diagonal}
\begin{aligned}
 x_i&\longmapsto(a_i,b_i),\\
 y_1&\longmapsto\bigl(a_1^k(a_1a_2)^{-m},1\bigr),\\
 y_2&\longmapsto\bigl(a_2^{\ell}(a_2a_1)^{-m},1\bigr).
\end{aligned}
\end{equation}
The second $y$-word is a conjugate of the unbased meridian
$a_2^{\ell}(a_1a_2)^{-m}$.  We use \eqref{eq:fixed-diagonal} in the
quaternion calculation.  Equivalence carries the finite covering subgroup
to an isomorphic covering subgroup and therefore preserves the equivalence
class of the restricted epimorphism.
\end{remark}

\section{Proof of standardness}\label{sec:proof}

\begin{proposition}\label{prop:universal-cover}
If $Q(k,\ell,m)$ is finite, then the universal cover of
$M_S(k,\ell,m)$ is $S^3$.
\end{proposition}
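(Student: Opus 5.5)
We will show that $\pi_1(M_S)\cong Q(k,\ell,m)$ is a finite group of a specific kind, identify $M_S(k,\ell,m)$ as a spherical space form $S^3/G$, and conclude that its universal cover is $S^3$. Throughout we use only classical Seifert theory (Seifert, Threlfall, Orlik), not geometrization.

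\textbf{Step 1: the group is finite and the fiber has finite order.} Proposition~\ref{prop:marked-splitting} and the presentation \eqref{eq:seifert-presentation} give $\pi_1(M_S)\cong Q(k,\ell,m)$, which is finite by hypothesis. The base orbifold of the Seifert fibration is $S^2(|k|,|\ell|,|m|)$. Finiteness of $Q$ forces finiteness of its quotient triangle group, so this orbifold is spherical, i.e.\ $\frac1{|k|}+\frac1{|\ell|}+\frac1{|m|}>1$. Some exponents may equal $\pm1$; the corresponding fibers are then ordinary, and the base is a sphere with at most two cone points. Since $Q$ is finite, the central element $h$ also has finite order.

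\textbf{Step 2: pass to a circle bundle over $S^2$.} The orbifold $S^2(|k|,|\ell|,|m|)$ is spherical, so it is finitely covered by $S^2$ via the spherical triangle group $\Delta$ acting on $S^2$. Pull back the Seifert fibration along the orbifold cover $S^2\to S^2(|k|,|\ell|,|m|)$. This gives a finite cover $\widetilde M\to M_S$, which is a genuine circle bundle over $S^2$ with some Euler number $e'$. Finite covers multiply $\pi_1$-indices, so $\pi_1(\widetilde M)$ is finite. A circle bundle over $S^2$ with Euler number $e'$ has fundamental group $\mathbb Z/|e'|$, infinite exactly when $e'=0$. Hence $e'\neq0$, and $\widetilde M$ is the lens space $L(|e'|,1)$. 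Its universal cover is $S^3$: the Hopf fibration and its quotients by $\mathbb Z/|e'|\subset S^1$ give this classically.

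\textbf{Step 3: conclude.} The universal cover of $M_S$ coincides with the universal cover of $\widetilde M$, because $\widetilde M\to M_S$ is a covering. That universal cover is therefore $S^3$. One can make Step 2 more concrete in the alternative way: Seifert's classification shows that Seifert spaces over spherical orbifolds with nonzero rational Euler number are quotients of $S^3$ by fiber-preserving isometric actions of finite subgroups of $SO(4)$. Both routes avoid Perelman.

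\textbf{Main obstacle.} The delicate point is Step 2, specifically justifying that the orbifold pullback is a genuine circle bundle and that its Euler number is nonzero. For the former, I would first check that the local fibered solid torus models over cone points are unwrapped by the local cyclic branched covers of the base, which is a standard local computation. For the latter, I would first argue via finiteness of $\pi_1$ as above, since the Euler number is multiplicative up to the degree in both the base and the fiber direction, and $e'=0$ would give infinite $\pi_1(\widetilde M)\supset\mathbb Z$. The degenerate cases where some exponent is $\pm1$ need a remark: the corresponding filling is a regular fiber, and the base orbifold has fewer cone points. In the cyclic cases, where two exponents are $\pm1$ or one is, $M_S$ is directly a lens space or $S^3$ by the same bundle argument with a good orbifold $S^2(p,q)$ having $p=q$ or being covered accordingly. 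Bad orbifolds $S^2(p)$ and $S^2(p,q)$ with $p\ne q$ can arise. For these I would handle $M_S$ directly as a union of two fibered solid tori, hence a lens space (or $S^3$, or $S^2\times S^1$, the last excluded by finiteness).
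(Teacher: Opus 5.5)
Your proof is correct and follows essentially the same route as the paper. When $|k|,|\ell|,|m|\ge 2$, both pull the Seifert fibration back along the orbifold cover $S^2\to S^2(|k|,|\ell|,|m|)$, use finiteness of $\pi_1$ to force $e\neq 0$, and identify the resulting circle bundle as $L(|e|,1)$, which is covered by $S^3$. When some exponent is $\pm1$, both treat $M_S$ as a union of two fibered solid tori, excluding $S^2\times S^1$ by finiteness; your detour through good orbifolds $S^2(p,p)$ there is unnecessary, since the two-solid-tori argument already covers every degenerate case.
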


\begin{proof}
Assume first that $|k|,|\ell|,|m|>1$.  The regular fiber represents the
central element
\[
 h=a_1^k=a_2^{\ell}=(a_1a_2)^m.
\]
Quotienting by $\langle h\rangle$ gives the finite spherical triangle group
\[
 Q/\langle h\rangle\cong
 \left\langle \bar a_1,\bar a_2\ \middle|\
 \bar a_1^k=\bar a_2^{\ell}=(\bar a_1\bar a_2)^m=1\right\rangle,
\]
which is the orbifold fundamental group of
$S^2(|k|,|\ell|,|m|)$.  Pull the Seifert fibration back to the universal
orbifold cover
\[
 S^2\longrightarrow S^2(|k|,|\ell|,|m|).
\]
The pullback is an ordinary oriented circle bundle
\[
 E\longrightarrow S^2
\]
which finitely covers $M_S$.  Since $\pi_1(E)$ is a subgroup of the finite
group $Q$, it is finite.  An oriented circle bundle over $S^2$ has
fundamental group $\mathbb Z/|e|\mathbb Z$ when its Euler number $e$ is
nonzero, and infinite cyclic fundamental group when $e=0$.  Hence here
$e\neq0$.  Such a bundle is the lens space $L(|e|,1)$, whose universal
cover is $S^3$.  The universal cover of $E$ is also the universal cover of
$M_S$, so $\widetilde M_S\cong S^3$.

If one of $|k|,|\ell|,|m|$ equals one, the Seifert fibration has at most two
genuine exceptional fibers.  The manifold is then a union of two solid
tori, hence a lens space unless it is $S^2\times S^1$.  The latter has
infinite fundamental group and is excluded by the finiteness of $Q$.
Again the universal cover is $S^3$.

This is the classical spherical part of Seifert theory
\cite{Seifert,Orlik}; it does not use the Poincar\'e theorem.
\end{proof}

\begin{figure}[t]
\centering
\begin{tikzcd}[column sep=large,row sep=large]
\widetilde U_1 \arrow[r,hook] \arrow[d,"p_1"'] &
S^3=\widetilde M_S \arrow[d,"p"] &
\widetilde U_2 \arrow[l,hook'] \arrow[d,"p_2"] \\
U_1 \arrow[r,hook] &
M_S(k,\ell,m) &
U_2 \arrow[l,hook']
\end{tikzcd}
\[
 \widetilde S=\partial\widetilde U_1=\partial\widetilde U_2,
 \qquad
 S_2=\partial U_1=\partial U_2.
\]
\caption{The universal cover of the marked genus-two splitting.  Each
preimage handlebody is connected because its fundamental group maps onto
$Q(k,\ell,m)$.}
\label{fig:lift}
\end{figure}

\begin{proof}[Proof of Theorem~\ref{thm:main}]
By Proposition~\ref{prop:marked-splitting}, the two coordinate maps of
$\alpha(k,\ell,m)$ are the inclusion homomorphisms of the marked genus-two
splitting
\[
 M_S=U_1\cup_{S_2}U_2.
\]
Let
\[
 p:S^3=\widetilde M_S\longrightarrow M_S
\]
be the universal cover.  Its degree is $n=|Q|$.  The image of
$\pi_1(U_i)=F_i$ in $\pi_1(M_S)=Q$ is all of $Q$, so
$\widetilde U_i=p^{-1}(U_i)$ is connected.  The corresponding subgroup is
\[
 \pi_1(\widetilde U_i)=\ker(F_i\to Q)=N_i.
\]
A connected $n$-sheeted cover of a genus-two handlebody is a handlebody of
genus
\[
 1+n(2-1)=n+1.
\]
Thus
\[
 S^3=\widetilde U_1\cup_{\widetilde S}\widetilde U_2,
 \qquad
 \widetilde S=p^{-1}(S_2)\cong S_{n+1},
\]
is a genus-$(n+1)$ Heegaard splitting; see Figure~\ref{fig:lift}.

The subgroup represented by the covering surface is
\[
 \pi_1(\widetilde S)
 =\ker(\Gamma_2\to Q)
 =\alpha^{-1}(N_1\times N_2)=H.
\]
Under the identifications
\[
 \pi_1(\widetilde S)=H,
 \qquad
 \pi_1(\widetilde U_i)=N_i,
\]
the splitting homomorphism of the lifted Heegaard splitting is exactly
\[
 H\longrightarrow N_1\times N_2,
 \qquad
 u\longmapsto\alpha(u),
\]
namely the Olshanskii epimorphism $\beta$.

Waldhausen's theorem says that every Heegaard splitting of $S^3$ is
isotopic to the standard splitting \cite{Waldhausen}.  Therefore the lifted
splitting is standard.  Passing to fundamental groups, there are
\[
 \sigma\in\Aut(H),
 \qquad
 \tau_i\in\Aut(N_i)
\]
such that
\[
 (\tau_1\times\tau_2)\,\beta\,\sigma
\]
is the canonical genus-$(n+1)$ epimorphism.  Hence $\beta$ is standard.
\end{proof}

\begin{remark}
The proof uses the explicit spherical classification of this Seifert
fibered family and Waldhausen's 1968 theorem.  It neither assumes that every
simply connected closed $3$-manifold is $S^3$ nor invokes geometrization.
The identification of the universal cover with $S^3$ is made directly by
passing to a circle bundle over $S^2$.
\end{remark}

\section{Homomorphism diagrams and paper \cite{KharlampovichVdovina} }\label{sec:diagrams}

Olshanskii's proof strategy is phrased in terms of homomorphism diagrams.
For an epimorphism
\[
 \pi_1(S_g)\twoheadrightarrow F_g\times F_g,
\]
he arranges the diagram into $g$ $a$-rings and $g$ $b$-rings.  When an
$a$-ring and a $b$-ring have exactly one common cell, their union is a
figure-eight, topologically a handle.  Cutting it off produces a lower-genus
diagram and suggests the corresponding Nielsen transformations in the two
free factors.  Repeating this operation verifies standardness
\cite[Sections~3.2--3.3]{Olshanskii}.

For the finite-cover family, the base diagram on $S_2$ lifts to the regular
cover determined by $Q(k,\ell,m)$.  The edge labels in the lift are obtained
by Reidemeister--Schreier rewriting in $N_1$ and $N_2$; zero-edges and
zero-cells may appear because rewriting can shorten labels.  Olshanskii
verified the quaternion example from the resulting development and used a
chain of intermediate covers for $(2,-3,5)$.  Theorem~\ref{thm:main}
replaces all such individual diagram calculations by one covering argument:
the entire lifted diagram is the Heegaard diagram of the universal cover of
a spherical Seifert manifold.

The companion paper \cite{KharlampovichVdovina} begins from a different
normal form.  After a change of canonical surface coordinates, the
unbased attaching relators of the genus-two parent are
\[
 a_1^k(a_1a_2)^{-m},
 \qquad
 a_2^{\ell}(a_1a_2)^{-m}.
\]
Thus the parent is a framed graph-layer construction supported on the
single edge joining the two handles.  The word $V$ has no role in the
associated balanced presentation because it is only a whisker, as explained
in Remark~\ref{rem:V}.  On the other hand, the finite-cover maps need not be
ordinary path one-layer maps in the Schreier bases naturally produced by
the cover.  The present Seifert proof therefore supplies the promised
standardness theorem without requiring a one-layer normal form upstairs.

\section{The quaternion Olshanskii epimorphism in genus nine}\label{sec:genus9}

We now make the case $(k,\ell,m)=(2,2,2)$ explicit.  This section is not
needed for the general proof of Theorem~\ref{thm:main}.  It gives a direct
standardization of the genus-nine map.  The essential point is that the
surface generators used below are chosen geometrically from the nine
handles of the covering handlebody; they are not arbitrary generators left
by a sequence of Tietze eliminations.

Work with the equivalent fixed-diagonal representative
\begin{equation}\label{eq:q8-alpha}
\begin{aligned}
 \widehat\alpha(x_i)&=(a_i,b_i),\\
 \widehat\alpha(y_1)&=\bigl(a_1^2(a_1a_2)^{-2},1\bigr),\\
 \widehat\alpha(y_2)&=\bigl(a_2^2(a_2a_1)^{-2},1\bigr).
\end{aligned}
\end{equation}
Let $q_a:F(a_1,a_2)\to Q$ and $q_b:F(b_1,b_2)\to Q$ be the two quotient
maps.  We use capital letters only for elements of the finite quotient:
\[
 A=q_a(a_1)=q_b(b_1),
 \qquad
 B=q_a(a_2)=q_b(b_2).
\]
Thus $a_1,a_2$ and $b_1,b_2$ remain free-group generators; they have not
been renamed as $A,B$.  The common quotient is
\begin{equation}\label{eq:q8-presentation}
 Q=\left\langle A,B\ \middle|\
 A^2=(AB)^2,\quad B^2=(BA)^2\right\rangle\cong Q_8.
\end{equation}
Indeed, $(BA)^2=A^{-1}(AB)^2A$, so the relations imply $A^2=B^2$,
$BAB^{-1}=A^{-1}$, and $A^4=1$.  Every element has a normal form
$A^rB^\epsilon$, $0\le r<4$, $\epsilon\in\{0,1\}$, and the usual
quaternion generators realize all eight forms.

Put
\[
 N_a=\ker q_a,
 \qquad
 N_b=\ker q_b,
 \qquad
 H=\ker\rho,
\]
where
\[
 \rho(x_1)=A,
 \quad \rho(x_2)=B,
 \quad \rho(y_1)=\rho(y_2)=1.
\]
Then
\[
 H\cong\pi_1(S_9),
 \qquad
 \rank N_a=\rank N_b=9,
\]
and the restriction
\[
 \beta=\widehat\alpha|_H:H\longrightarrow N_a\times N_b
\]
is the genus-nine Olshanskii epimorphism.

\subsection{The covering handlebody and a handle system adapted to a maximal tree}

Let $W_b$ be the standard genus-two handlebody with
\[
 \pi_1(W_b)=F(b_1,b_2),
\]
where $x_j$ is a longitude of its $j$th handle and $y_j$ is the
corresponding meridian.  Since $\rho(y_1)=\rho(y_2)=1$, the eight-sheeted
cover $S_9\to S_2=\partial W_b$ extends to the connected handlebody cover
\[
 \widetilde W_b\longrightarrow W_b
\]
associated with $N_b\leq F(b_1,b_2)$.  The handlebody $W_b$ retracts onto a
rose with two edges, and $\widetilde W_b$ retracts onto the corresponding
Schreier graph.

Choose the transversal
\[
 \mathcal T=\{1,A,A^2,A^3,B,AB,A^2B,A^3B\}.
\]
The Schreier graph has one vertex for each element of $Q_8$.  From a vertex
$q$ there is an $a_1$-edge to $qA$ and an $a_2$-edge to $qB$.  The same
graph, with edge labels $b_1,b_2$, is the spine of $\widetilde W_b$.
Consequently the capital labels on the vertices are quotient elements,
whereas the lower-case labels on the edges are free-group letters.

The phrase \emph{thicken the Schreier graph} means the following.  Replace
every vertex by a small $3$-ball and every edge by a tube joining the
corresponding balls.  Choose the maximal tree drawn with solid edges in
Figure~\ref{fig:q8-schreier}.  A regular neighborhood of this tree is a
$3$-ball.  There are sixteen positively oriented edges and eight vertices,
so nine edges lie outside the tree:
\[
 16-8+1=9.
\]
After the tree-neighborhood has been regarded as one ball, every non-tree
edge is one $1$-handle attached to that ball.  Hence the regular
neighborhood of the graph is a genus-nine handlebody.

For the $i$th non-tree edge, let $E_i$ be its co-core disk.  Choose a simple
closed curve $z_i$ on the boundary which passes once through this handle
and otherwise lies on the boundary of the tree-ball, and put
\[
 t_i=\partial E_i.
\]
The curves can be chosen and oriented so that
\[
 |z_i\cap t_j|=\delta_{ij},
 \qquad
 z_i\cap z_j=t_i\cap t_j=\varnothing\quad(i\ne j).
\]
Choose one basepoint outside the nine handle neighborhoods and a common
whisker to each pair.  Then
\[
 (z_1,t_1),\ldots,(z_9,t_9)
\]
is a symplectic surface system adapted to the chosen maximal tree, and
\[
 \pi_1(S_9)=
 \left\langle z_1,t_1,\ldots,z_9,t_9\ \middle|\
 [z_1,t_1]\cdots[z_9,t_9]=1\right\rangle.
\]

\begin{figure}[!htbp]
\centering
\begin{tikzpicture}[>=Stealth,
  v/.style={circle,draw,minimum size=8mm,inner sep=1pt},
  tree/.style={->,very thick},
  nontree/.style={->,densely dashed},
  elab/.style={fill=white,inner sep=1.2pt,font=\small}]
  \node[v] (e) at (0,2.8) {$1$};
  \node[v] (a) at (2.2,2.8) {$A$};
  \node[v] (a2) at (4.4,2.8) {$A^2$};
  \node[v] (a3) at (6.6,2.8) {$A^3$};
  \node[v] (b) at (0,0) {$B$};
  \node[v] (ab) at (2.2,0) {$AB$};
  \node[v] (a2b) at (4.4,0) {$A^2B$};
  \node[v] (a3b) at (6.6,0) {$A^3B$};

  \draw[tree] (e) -- node[above] {$a_1$} (a);
  \draw[tree] (a) -- node[above] {$a_1$} (a2);
  \draw[tree] (a2) -- node[above] {$a_1$} (a3);
  \draw[tree] (e) -- node[left] {$a_2$} (b);
  \draw[tree] (a) -- node[left] {$a_2$} (ab);
  \draw[tree] (a2) -- node[left] {$a_2$} (a2b);
  \draw[tree] (a3) -- node[right] {$a_2$} (a3b);

  \draw[nontree] (a3) to[bend right=38] node[elab,above] {$c_1$} (e);
  \draw[nontree] (b) to[bend right=30] node[elab,below] {$c_2$} (a3b);
  \draw[nontree] (b) to[bend left=8]
       node[elab,pos=.18,above left] {$c_3$} (a2);
  \draw[nontree] (ab) -- node[elab,below] {$c_4$} (b);
  \draw[nontree] (ab) to[bend left=8]
       node[elab,pos=.08,above left] {$c_5$} (a3);
  \draw[nontree] (a2b) -- node[elab,below] {$c_6$} (ab);
  \draw[nontree] (a2b) to[bend right=8]
       node[elab,pos=.08,above right] {$c_7$} (e);
  \draw[nontree] (a3b) -- node[elab,below] {$c_8$} (a2b);
  \draw[nontree] (a3b) to[bend right=8]
       node[elab,pos=.18,above right] {$c_9$} (a);
\end{tikzpicture}
\caption{The positive-edge Schreier graph of $Q_8$.  The capital letters
label quotient elements: $A=q_a(a_1)$ and $B=q_a(a_2)$.  The lower-case
letters label free-group edges.  The seven solid edges form a maximal tree.
The nine dashed edges become the nine handles of the covering handlebody
and give the Schreier basis $c_1,\ldots,c_9$ of $N_a$.}
\label{fig:q8-schreier}
\end{figure}
\FloatBarrier

\subsection{Schreier bases and the images of the handle curves}

For a non-tree edge $e_i:v_i\xrightarrow{a_{\epsilon_i}}w_i$, let $p_v$
be the tree path from the base vertex to $v$.  The loop
\[
 p_{v_i}e_ip_{w_i}^{-1}
\]
is the corresponding fundamental cycle.  Its label gives the following
Schreier basis of $N_a$:
\begin{equation}\label{eq:c-basis}
\begin{aligned}
 c_1&=a_1^4,&
 c_2&=a_2a_1a_2^{-1}a_1^{-3},&
 c_3&=a_2^2a_1^{-2},\\
 c_4&=a_1a_2a_1a_2^{-1},&
 c_5&=a_1a_2^2a_1^{-3},&
 c_6&=a_1^2a_2a_1a_2^{-1}a_1^{-1},\\
 c_7&=a_1^2a_2^2,&
 c_8&=a_1^3a_2a_1a_2^{-1}a_1^{-2},&
 c_9&=a_1^3a_2^2a_1^{-1}.
\end{aligned}
\end{equation}
Let $d_i$ be the corresponding basis of $N_b$, obtained by replacing
$a_j$ by $b_j$.  Since the longitude $z_i$ projects to the same fundamental
cycle in the two coordinate handlebodies,
\begin{equation}\label{eq:z-images}
 \beta(z_i)=(c_i,d_i).
\end{equation}

The co-core disk of an edge labelled $b_{\epsilon_i}$ projects to the
corresponding meridian disk of $W_b$.  Therefore, after using the tree path
to base its boundary, $t_i$ projects to
\[
 \tau_i(x_1,x_2)y_{\epsilon_i}\tau_i(x_1,x_2)^{-1},
\]
where
\[
\begin{array}{c|ccccccccc}
 i&1&2&3&4&5&6&7&8&9\\ \hline
 \tau_i&
 x_1^3&x_2&x_2&x_1x_2&x_1x_2&
 x_1^2x_2&x_1^2x_2&x_1^3x_2&x_1^3x_2\\
 \epsilon_i&1&1&2&1&2&1&2&1&2.
\end{array}
\]
Its second coordinate is trivial.  Reidemeister--Schreier rewriting of its
first coordinate gives
\begin{equation}\label{eq:r-images}
 \beta(t_i)=(r_i,1),
\end{equation}
where
\begin{equation}\label{eq:r-list}
\begin{aligned}
 r_1&=c_1c_9^{-1}c_2^{-1}c_1^{-1},&
 r_2&=c_2c_8c_9^{-1}c_2^{-1},&
 r_3&=c_3c_8^{-1}c_3^{-1},\\
 r_4&=c_4c_2c_3^{-1}c_4^{-1},&
 r_5&=c_5c_2^{-1}c_1^{-1}c_5^{-1},&
 r_6&=c_6c_4c_1^{-1}c_5^{-1}c_6^{-1},\\
 r_7&=c_7c_4^{-1}c_7^{-1},&
 r_8&=c_8c_6c_7^{-1}c_8^{-1},&
 r_9&=c_9c_6^{-1}c_9^{-1}.
\end{aligned}
\end{equation}
For example, the first non-tree edge is the $a_1$-edge from $A^3$ to
$1$.  Its fundamental cycle has label $a_1^4=c_1$, while the boundary of
its co-core projects to $x_1^3y_1x_1^{-3}$.  Rewriting the first coordinate
of this word gives
\[
 r_1=c_1c_9^{-1}c_2^{-1}c_1^{-1}.
\]

\subsection{Genuine surface automorphisms supported on the lifted handles}

For each $i$, let $N_i$ be a small regular neighborhood in $S_9$ of
$z_i\cup t_i$.  Since $z_i$ and $t_i$ meet once, $N_i$ is a one-holed
torus.  The neighborhoods $N_i$ may be chosen pairwise disjoint.  The next
lemma constructs the required source automorphism from actual Dehn twists.

\begin{lemma}\label{lem:local-D}
Let $z,t$ be the longitude and meridian of a one-holed torus $N$.  Choose
the orientations of the Dehn twists so that, on the based fundamental
group,
\[
 T_z(z)=z,\qquad T_z(t)=zt,
\]
and
\[
 T_t(z)=tz,\qquad T_t(t)=t.
\]
Compositions are read from right to left.  Put
\[
 C=T_z^{-1}T_tT_z^{-1},
 \qquad
 D=C^2.
\]
Then $D$ is induced by a homeomorphism supported in $N$ and equal to the
identity near $\partial N$, and
\begin{equation}\label{eq:local-D-action}
 D(t)=z^{-1}t^{-1}z,
 \qquad
 D(z)=z^{-1}t^{-1}z^{-1}tz.
\end{equation}
\end{lemma}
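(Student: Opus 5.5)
The plan is to establish the support statement directly from the definition of Dehn twists, and then to obtain \eqref{eq:local-D-action} by composing the given actions of $T_z$ and $T_t$ on the based generators.

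\textbf{Support.} The curves $z$ and $t$ lie in the interior of $N$. Hence the Dehn twists $T_z$ and $T_t$ can be realized by homeomorphisms supported in thin annular collars of $z$ and $t$ contained in $N$. Each such homeomorphism is the identity near $\partial N$, and so is any composite of them and their inverses. In particular $C$ and $D=C^2$ are induced by homeomorphisms of $S_9$ supported in $N$ and fixing a neighbourhood of $\partial N$. I would place the basepoint (or the whisker endpoint) outside the twisting annuli, so that the induced maps on the based group are the stated ones and not merely conjugates of them.

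\textbf{Action of $C$.} We need $T_z^{-1}(z)=z$ and $T_z^{-1}(t)=z^{-1}t$, which are read off from $T_z$. Reading $C$ right to left, first apply $T_z^{-1}$, then $T_t$, then $T_z^{-1}$.
\begin{itemize}
\item For $t$: $t\mapsto z^{-1}t$, then $z^{-1}t\mapsto (tz)^{-1}t=z^{-1}$, then $z^{-1}\mapsto z^{-1}$. So $C(t)=z^{-1}$.
\item For $z$: $z\mapsto z$, then $z\mapsto tz$, then $tz\mapsto z^{-1}t\,z$. So $C(z)=z^{-1}tz$.
\end{itemize}

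\textbf{Action of $D=C^2$.} Since $C$ is a homomorphism, $D(t)=C(z^{-1})=C(z)^{-1}=z^{-1}t^{-1}z$. Also
\[
D(z)=C(z)^{-1}C(t)C(z)=(z^{-1}t^{-1}z)\,z^{-1}\,(z^{-1}tz)=z^{-1}t^{-1}z^{-1}tz .
\]
These are exactly the formulas \eqref{eq:local-D-action}.

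\textbf{Consistency check.} As a check on orientation conventions, I would verify that $C$ preserves the boundary word $[z,t]$ up to the conjugation allowed by the whisker. This must hold for any homeomorphism fixing $\partial N$, and confirms that the stated twist conventions $T_z(t)=zt$ and $T_t(z)=tz$ are compatible with a single orientation of $N$.

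\textbf{Main obstacle.} The only delicate point is the convention bookkeeping. The formulas $T_z(t)=zt$ and $T_t(z)=tz$ must come from twists of the same handedness relative to the chosen basepoint, and the composition order must be read correctly. If a left/right mismatch appears, I would fix it by replacing $T_t$ with $T_t^{-1}$ in the geometric realization. This does not affect the algebra, since the lemma takes the displayed actions as the definition of the twists used.
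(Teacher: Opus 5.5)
Your proof is correct and follows the paper's argument: the support claim comes from realizing $T_z,T_t$ in annuli inside $N$, and the same right-to-left computation gives $C(t)=z^{-1}$ and $C(z)=z^{-1}tz$, from which $D=C^2$ follows. One small correction to your side remark: the automorphisms $T_z(t)=zt$ and $T_t(z)=tz$ actually come from twists of \emph{opposite} handedness. For example, $T_zT_tT_z(z)=ztz\neq tzt^2z=T_tT_zT_t(z)$, whereas $T_z$ and $T_t^{-1}$ do satisfy the braid relation. This is harmless, because the lemma lets each twist's direction be chosen separately, exactly as your fallback allows.
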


\begin{proof}
The maps $T_z$ and $T_t$ are Dehn twists about the two simple curves, so
they are homeomorphisms supported in $N$ and may be chosen to fix
$\partial N$ pointwise.  Direct calculation gives
\[
 C(t)=T_z^{-1}T_t(z^{-1}t)=z^{-1}
\]
and
\[
 C(z)=T_z^{-1}T_t(z)=z^{-1}tz.
\]
Consequently
\[
 D(t)=C(z^{-1})=z^{-1}t^{-1}z
\]
and
\[
 D(z)=C(z^{-1}tz)=z^{-1}t^{-1}z^{-1}tz.
\]
Thus $D$ is a product of genuine Dehn twists, not merely a formal
free-group transformation.  Since it fixes the boundary of $N$, it extends
by the identity to the whole surface.
\end{proof}

Apply Lemma~\ref{lem:local-D} in $N_i$ and denote the resulting surface
automorphism by $D_i$.  It fixes every handle outside $N_i$ and satisfies
\[
 D_i(t_i)=z_i^{-1}t_i^{-1}z_i.
\]
Since the supports are disjoint, the automorphisms $D_i$ commute.  Put
\[
 D=D_1\cdots D_9.
\]
The formula for $D_i(z_i)$ will not be needed below.  It is important only
that $D$ is a genuine automorphism of the surface group; the basis
criterion allows arbitrary resulting images of the longitude generators.

\subsection{The transformed meridian words form a free basis}

Using \eqref{eq:z-images}, \eqref{eq:r-images}, and the direct-product
multiplication, we obtain
\[
\begin{aligned}
 \beta D(t_i)
 &=\beta(z_i)^{-1}\beta(t_i)^{-1}\beta(z_i)\\
 &=(c_i^{-1}r_i^{-1}c_i,1).
\end{aligned}
\]
Put
\[
 q_i=c_i^{-1}r_i^{-1}c_i.
\]
Then \eqref{eq:r-list} gives
\begin{equation}\label{eq:q-list}
\begin{aligned}
 q_1&=c_2c_9,&
 q_2&=c_9c_8^{-1},&
 q_3&=c_8,\\
 q_4&=c_3c_2^{-1},&
 q_5&=c_1c_2,&
 q_6&=c_5c_1c_4^{-1},\\
 q_7&=c_4,&
 q_8&=c_7c_6^{-1},&
 q_9&=c_6.
\end{aligned}
\end{equation}
This tuple is a free basis of $N_a$.  Indeed, using the current value of
each word at every step, make the following Nielsen transformations:
\begin{equation}\label{eq:nielsen-sequence}
\begin{aligned}
 q_2&\leftarrow q_2q_3=c_9,\\
 q_1&\leftarrow q_1q_2^{-1}=c_2,\\
 q_4&\leftarrow q_4q_1=c_3,\\
 q_5&\leftarrow q_5q_1^{-1}=c_1,\\
 q_6&\leftarrow q_6q_7=c_5c_1,\\
 q_6&\leftarrow q_6q_5^{-1}=c_5,\\
 q_8&\leftarrow q_8q_9=c_7.
\end{aligned}
\end{equation}
The resulting ordered tuple is
\[
 (c_2,c_9,c_8,c_3,c_1,c_5,c_4,c_7,c_6),
\]
a permutation of the Schreier basis \eqref{eq:c-basis}.

\begin{proposition}\label{prop:q8-standard}
The genus-nine Olshanskii epimorphism is standard.
\end{proposition}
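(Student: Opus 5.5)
The plan is to apply the basis criterion, Lemma~\ref{lem:basis-criterion}, with the roles of the two generator families arranged to fit its hypotheses. We take the symplectic system $(z_1,t_1),\ldots,(z_9,t_9)$ of $H\cong\pi_1(S_9)$ as the canonical generators. The $z_i$ play the role of the $x_i$, and the $t_i$ play the role of the $y_i$. We also interchange the two target factors, which Definition~\ref{def:standard} and the criterion permit. By \eqref{eq:z-images}, $\beta(z_i)=(c_i,d_i)$, where $(c_i)$ and $(d_i)$ are the Schreier bases of $N_a$ and $N_b$. After swapping factors this reads $z_i\mapsto(d_i,c_i)$, which is exactly the shape $\phi(x_i)=(b_i,a_i)$ required by Lemma~\ref{lem:basis-criterion}, with $d_i$ as the free basis of $N_b$ and $c_i$ as the free basis of $N_a$.

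Next we take $U=D=D_1\cdots D_9$, the genuine surface automorphism built from Lemma~\ref{lem:local-D}. The computation preceding the proposition gives $\beta D(t_i)=(q_i,1)$, which after the swap is $(1,q_i)$. The Nielsen sequence \eqref{eq:nielsen-sequence} shows that $(q_1,\ldots,q_9)$ is a free basis of $N_a$. It is obtained by Nielsen moves from a permutation of the Schreier basis \eqref{eq:c-basis}, and Nielsen moves and permutations preserve the property of being a free basis. The remaining data are bookkeeping. We write $c_i=u_i(q_1,\ldots,q_9)$ by inverting the Nielsen sequence, and we record $\beta D(z_i)=(B_i,C_i)$ with $C_i\in N_a=F(q_1,\ldots,q_9)$. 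The lemma places no condition on these images. With the hypotheses verified, Lemma~\ref{lem:basis-criterion} gives that $\beta$ is standard.

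I expect the main obstacle to be confirming that the hypotheses are literally satisfied, not the algebra. There are two points to check. First, $\beta$ must actually be an epimorphism onto $N_a\times N_b$, as \eqref{eq:beta} asserts. Second, the system $(z_i,t_i)$ must be a based canonical system, so that the standard surface relation holds in these generators. This is where the choice of whiskers matters: the identification $\beta(z_i)=(c_i,d_i)$ and the rewriting \eqref{eq:r-list} must use the same tree-path basings. I would verify this by checking that the whisker to the $i$th handle is the tree path $p_{v_i}$ in both coordinate handlebodies. Because the same Schreier graph and tree serve both factors, the second coordinates are simultaneously $d_i$ and $1$.

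Finally, I would double-check the formula $q_i=c_i^{-1}r_i^{-1}c_i$ on one or two entries of \eqref{eq:q-list}. For example, $q_1=c_1^{-1}(c_1c_2c_9c_1^{-1})c_1=c_2c_9$, and the other entries follow in the same way. As a consistency check, the result agrees with Theorem~\ref{thm:main} in the case $(k,\ell,m)=(2,2,2)$.
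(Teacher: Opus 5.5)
Your proposal is correct and is essentially the paper's own proof: interchange the target factors, take $x_i=z_i$, $y_i=t_i$, $U=D$, use $\phi(z_i)=(d_i,c_i)$ and $\phi D(t_i)=(1,q_i)$, show with the Nielsen sequence that $(q_1,\ldots,q_9)$ is a free basis of $N_a$, and apply Lemma~\ref{lem:basis-criterion}. Your extra checks (the sample computation $q_1=c_2c_9$ and using the same tree-path whiskers in both coordinates) are sound, and they only make explicit what the paper already builds into its setup.
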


\begin{proof}
Interchange the two target factors and write
\[
 \phi:\pi_1(S_9)\longrightarrow N_b\times N_a.
\]
With the free bases $(d_1,\ldots,d_9)$ and $(c_1,\ldots,c_9)$, equations
\eqref{eq:z-images} and \eqref{eq:r-images} say
\[
 \phi(z_i)=(d_i,c_i),
 \qquad
 \phi(t_i)=(1,r_i).
\]
After precomposition by the genuine surface automorphism $D$,
\[
 \phi D(t_i)=(1,q_i).
\]
By \eqref{eq:nielsen-sequence}, the tuple $(q_1,\ldots,q_9)$ is a free
basis of $N_a$.  Apply Lemma~\ref{lem:basis-criterion} with
\[
 x_i=z_i,
 \qquad
 y_i=t_i,
 \qquad
 U=D.
\]
It follows that $\phi$, and hence $\beta$, is standard.
\end{proof}

\begin{remark}\label{rem:geometric-generators}
The proof uses the geometric longitudes and co-core meridians chosen before
any Tietze elimination.  This is why the neighborhoods $N_i$, the Dehn
twists $T_{z_i},T_{t_i}$, and the surface automorphisms $D_i$ are defined.
An arbitrary algebraic generating set obtained after
Reidemeister--Schreier rewriting and elimination need not already be paired
into such handles; it would first have to be identified with a geometric
surface system.
\end{remark}

\subsection{The associated Andrews--Curtis certificate}

The same calculation gives a fixed-rank certificate for the associated
balanced presentation.  This is a presentation-level statement; it is not
being used in place of the surface-automorphism argument above.

Passing from $r_i$ to
\[
 q_i=c_i^{-1}r_i^{-1}c_i
\]
uses inversion and conjugation of the individual relators.  The
transformations in \eqref{eq:nielsen-sequence} multiply one relator by
another or its inverse, and the final tuple is a permutation of the ambient
free basis.  No generator--relator pair is added or deleted.  Therefore
\[
 \left\langle c_1,\ldots,c_9\ \middle|\ r_1,\ldots,r_9\right\rangle
\]
is Andrews--Curtis equivalent at fixed rank to the standard trivial
presentation.

Olshanskii's Figure~6 is a development of the lifted homomorphism diagram
and contains twenty-one nonzero cells.  Figure~\ref{fig:q8-schreier}
records different information: it is the quotient graph that gives the
covering handlebody, the handle system adapted to the maximal tree, and the
two Schreier bases.

\section{Concluding remarks}

The finite-cover construction has two levels.  The genus-two parent map is
simple and its common quotient is a central extension of a spherical
triangle group.  The restricted map in genus $n+1$ is combinatorially much
larger, but it is not a new arbitrary Heegaard splitting: it is the lift of
the marked Seifert splitting.  This observation is what makes a uniform
standardness theorem possible.

The proof also clarifies two points of notation.  First, the name
\emph{Olshanskii epimorphism} should refer to the restricted product
epimorphism \eqref{eq:beta}, rather than to the coordinate-surjective
parent homomorphism.  Second, the word $V$ in the original table is a
basepoint path.  It is essential in that based table, but it is absent from
the unbased attaching curve, the balanced presentation, and the covering
quotient.

For the quaternion cover, the explicit calculation shows how the two
classical approaches meet.  The maximal tree gives the covering handlebody,
a geometric longitude--meridian pair on each lifted handle, and the two
Schreier bases.  Explicit products of Dehn twists supported on the nine
one-holed tori transform the lifted meridian words into a free basis, so the
basis criterion proves standardness.  The separate Andrews--Curtis
calculation reduces the associated balanced presentation.  In higher-order
examples these direct calculations become large, while the Seifert-cover
proof remains unchanged.

\section*{Acknowledgments}
We have used Chat GPT to edit and generate portions of the text and to make figures,  but we take full intellectual responsibility for the content. We used Magma for computations in Section 7.
The first author thanks the Dolciani--Halloran Foundation for its support.
Both authors thank PSC--CUNY for its support.

\end{document}